\newtheorem{Thm}{Theorem}[]
\newtheorem{Prp}[Thm]{Proposition}
\theoremstyle{definition}
\theoremstyle{remark}
\newtheorem{Rmk}[Thm]{Remark}
\numberwithin{equation}{section}
\def\id{\mathrm{id}}        
\def\opp{\mathrm{op}}       
\def\Ker{\mathrm{Ker}}
\def\Im{\mathrm{Im}}
\def\Hom{\mathrm{Hom}}
\def\N{\mathbb{N}}
\def\Z{\mathbb{Z}}
\def\N{\mathbb{N}}
\def\chr{\mathrm{char}}
\def\bbinom#1#2{\ensuremath{\left(\kern-.3em\left(\genfrac{}{}{0pt}{}{\!#1\!}{\!#2\!}\right)\kern-.3em\right)}}
\newcommand{\icases}[7]{#7\{\!\begin{smallmatrix}
                                #1\hfill;  & ~#5#2\hfill &\!\!\\[#6]
                                #3\hfill;  & ~#5#4\hfill &\!\!
                              \end{smallmatrix}}
\newcommand{\iicases}[9]{#9\{\!\begin{smallmatrix}
                                #1\hfill;  & ~#7#2\hfill &\!\!\\[#8]
                                #3\hfill;  & ~#7#4\hfill &\!\!\\[#8]
                                #5\hfill;  & ~#7#6\hfill &\!\!
                              \end{smallmatrix}}     
\begin{document}
\title[Hochschild (Co)Homology of Exterior Algebras using AMT]{Hochschild (Co)Homology of Exterior Algebras\\ using Algebraic Morse Theory}
\author{Leon Lampret}
\address{Department of Mathematics, University of Ljubljana, Slovenia}
\email{leon.lampret@imfm.si}
\author{Aleš Vavpetič}
\address{Department of Mathematics, University of Ljubljana, Slovenia}
\email{ales.vavpetic@fmf.uni-lj.si}
\date{December 28, 2015}
\keywords{discrete/algebraic Morse theory, homological algebra, chain complex, acyclic matching, exterior algebra, algebraic combinatorics, minimal free bimodule resolution}
\subjclass{16E40, 16E05, 18G10, 18G35, 58E05}

\begin{abstract}
In \cite{citearticleHanXuHHEA}, the authors computed the additive and multiplicative structure of $H\!H^\ast\!(A;\!A)$, where $A$ is the $n$-th exterior algebra over a field. In this paper, we derive all their results using a different method (AMT), as well as calculate the additive structure of $H\!H_k\!(A;\!A)$ and $H\!H^k\!(A;\!A)$ over $\Z$. We provide concise presentations of algebras $H\!H_\ast\!(A;\!A)$ and $H\!H^\ast\!(A;\!A)$, as well as determine their generators in the Hochschild complex. Lastly, we compute an explicit free resolution (spanned by multisets) of the $A^e$-module $A$ and describe the homotopy equivalence to its bar resolution.
\end{abstract}

\maketitle


\subsection*{Conventions} Throughout this article, $R$ will denote a commutative unital ring, $A\!=\!\Lambda[x_1,\ldots,x_n] \!=\!\Lambda(R^n)$ will be the $n$-th exterior algebra, and $A^e\!=\!A\!\otimes_{\!R}\!A^\opp$ its enveloping algebra, so that $A$-$A$-bimodules correspond to $A^e$-modules.
\par Symbols $\sigma$ and $\tau$ will denote a set and a multiset respectively, or equivalently a strictly increasing and an increasing sequence, of elements from $[n]\!=\!\{1,\ldots,n\}$.  Let $\binom{[n]}{k}\!=\! \{k\text{-element subsets of }[n]\}$ and $\bbinom{\smash{[n]}}{k}\!=\! \{k\text{-element multisubsets of }[n]\}$. Then we have $\big|\!\binom{[n]}{k}\!\big|\!=\!\binom{n}{k}\!=\! \frac{n!}{k!(n-k)!}$ and $\big|\!\!\bbinom{\smash{[n]}}{k}\!\!\big|\!=\! \bbinom{n}{k}\!=\!\binom{n+k-1}{k}$.
\par For the sake of brevity, we omit the wedge $\wedge$ and product $\cdot$ symbols. We denote $x_\sigma\!=\!\bigwedge_{i\in\sigma}\!x_i$ and $x_\tau\!=\!\prod_{i\in\tau}\!x_i$, so that algebras $\Lambda[x_1,\ldots,x_n]$ and $R[x_1,\ldots,x_n]$ have $R$-module bases $\big\{x_\sigma;\: k\!\in\!\N, \sigma\!\in\!\binom{[n]}{k}\!\big\}$ and $\big\{x_\tau;\: k\!\in\!\N, \tau\!\in\!\bbinom{\smash{[n]}}{k}\!\!\big\}$ respectively.

\vspace{4mm}
\section{Bar resolution} Using algebraic Morse theory, we find an explicit homotopy equivalence between the bar resolution of bimodule $A$ and a minimal free resolution.

\subsection{Complex}\label{identifications} The \emph{bar resolution} of any associative unital $R$-algebra $A$ is
$$\begin{array}{c}
B_\ast\!:\;\; \ldots\longrightarrow A^{\otimes k+2} \longrightarrow A^{\otimes k+1} \longrightarrow \ldots\longrightarrow A^{\otimes 2} \longrightarrow A \longrightarrow 0,\\
b_k\!: a_0\!\otimes\!\cdots\!\otimes\!a_{k+1} \longmapsto \sum_{0\leq i\leq k} (-1)^i a_0\!\otimes\!\cdots\!\otimes\!a_ia_{i+1}\!\otimes\!\cdots\!\otimes\!a_{k+1},
\end{array}$$
which is an exact complex of $A^e$-modules, where $\otimes\!=\!\otimes_{\!R}$ and $A^e$ acts on $A^{\otimes k+2}$ by \[(\alpha\!\otimes\!\beta)(a_0\!\otimes\!a_1\!\otimes\!\cdots\!\otimes\!a_k\!\otimes\!a_{k+1})=
(\alpha a_0)\!\otimes\!a_1\!\otimes\!\cdots\!\otimes\!a_k\!\otimes\!(a_{k+1}\beta).\] Hochschild (co)homology of $A$ with coefficients in an $A^e$-module $M$ is the homology of the complex $(C_\ast,\partial_\ast)\!=\! M\!\otimes_{\!A^{\!e}}\!B_\ast$ and $(C^\ast,\delta^\ast)\!=\! \Hom_{A^{\!e}}(B_\ast,M)$. Note that\vspace{-1mm}
\[M\otimes_{\!A^{\!e}}A^{\otimes k+2} \cong M\otimes_{\!R}A^{\otimes k}\vspace{-1mm}\] via
$m\otimes(a_0\!\otimes\!a_1\!\otimes\!\ldots\!\otimes\!a_k\!\otimes\!a_{k+1}) \longmapsto (a_0ma_{k+1})\otimes(a_1\!\otimes\!\ldots\!\otimes\!a_k)$ and\vspace{-1mm}
\[\Hom_{A^{\!e}}(A^{\otimes k+2}\!,M)\cong \Hom_R(A^{\otimes k}\!,M)\vspace{-1mm}\] via $\varphi\longmapsto(a_1\!\otimes\!\ldots\!\otimes\!a_k\!\mapsto\!\varphi(1\!\otimes\!a_1\!\otimes\!\ldots\!\otimes\!a_k\!\otimes\!1))$.

\subsection{AMT} To a chain complex of free modules $(B_\ast,b_\ast)$ we associate a weighted digraph $\Gamma_{B_\ast}$ (vertices are basis elements of $B_\ast$, weights of edges are nonzero entries of matrices $b_\ast$). Then we carefully select a matching $\mathcal{M}$ in this digraph, so that its edges have invertible weights and if we reverse the direction of every $e\!\in\!\mathcal{M}$ in $\Gamma_{B_\ast}$, the obtained digraph $\Gamma^{\mathcal{M}}_{B_\ast}$ contains no directed cycles and no infinite paths in two adjacent degrees. Under these conditions, the AMT theorem  (Sk\"{o}ldberg \cite{citearticleSkoldbergMTFAV}, Welker \& J\"{o}llenbeck \cite{citearticleJollenbeckADMTACA}) provides a homotopy equivalent complex $(\mathring{B}_\ast,\mathring{b}_\ast)$, spanned by the unmatched vertices in $\Gamma^{\mathcal{M}}_{B_\ast}$, and with the boundary $\mathring{b}_\ast$ given by the sum of weights of directed paths in $\Gamma^{\mathcal{M}}_{B_\ast}$. For more details, we refer the reader to the two articles above (which specify the homotopy equivalence), or \cite{citeLampretVavpeticCLAAMT} for a quick formulation.

\subsection{Digraph} Since $R$-module $A$ is free on $\{x_\sigma;\, \sigma\!\subseteq\![n]\}$, the $A^{\!e}$-module $A^{\otimes k+2}$ is free on $\{1\!\otimes\!x_{\sigma_1}\!\!\otimes\!\ldots\!\otimes\!x_{\sigma_k}\!\!\otimes\!1;\: \sigma_1,\ldots,\sigma_k\!\subseteq\![n]\}$. These tensors are the vertices of the digraph $\Gamma_{B_\ast}$. From the definition of $b_\ast$, we see that the edges of $\Gamma_{B_\ast}$ and their weights are of three different forms, namely\vspace{-1mm}
\[\xymatrix@R=9.0mm@C=3mm{&1\!\otimes\!x_{\sigma_1}\!\!\otimes\!\ldots\!\otimes\!x_{\sigma_k}\!\!\otimes\!1
\ar[dl]_{x_{\sigma_{\!1}}\!\otimes1}\ar[d]|-{(-1)^i}\ar[dr]^{(-1)^k1\otimes x_{\sigma_{\!k}}}&\\
                          1\!\otimes\!x_{\sigma_2}\!\!\otimes\!\ldots\!\otimes\!x_{\sigma_k}\!\!\otimes\!1&
                          1\!\otimes\!x_{\sigma_1}\!\!\otimes\!\ldots\!\otimes\!x_{\sigma_i}x_{\sigma_{i+1}}\!\!\otimes\!\ldots\!\otimes\!x_{\sigma_k}\!\!\otimes\!1&
                          1\!\otimes\!x_{\sigma_1}\!\!\otimes\!\ldots\!\otimes\!x_{\sigma_{k\!-\!1}}\!\!\otimes\!1,\vspace{-1mm}}\]
where $x_{\sigma_i}x_{\sigma_{i+1}}\!=\! \icases{(-1)^jx_{\sigma_{\!i}\cup\sigma_{\!i+1}}}{\sigma_i\cap\sigma_{i+1}=\emptyset}{0}{\text{otherwise}}{\!\!}{-1pt}{\Big}$ and $j$ is the number of transpositions needed to transform the concatenated elements of $\sigma_i,\sigma_{i+1}$ into an increasing order. By the normalized resolution \cite[1.1.14]{citeLodayCH}, we may assume all $\sigma_1,\ldots,\sigma_k$ are nonempty.

\subsection{Matching} Let us explain how we construct the matching. Given a vertex $1\!\otimes\!x_{\!\sigma_{\!1}}\!\!\otimes\!\ldots\!\otimes\!x_{\!\sigma_{\!k}}\!\!\otimes\!1$, going up means splitting some $\sigma_i$ into $\sigma'_i$ and $\sigma_i\!\setminus\!\sigma'_i$. The simplest choice is $i\!=\!1$ and $\sigma'_i\!=\!\{\max\sigma_i\}$, i.e. let $\mathcal{M}\!=\!\Big\{ \begin{smallmatrix}1\otimes x_i\otimes x_{\sigma_{\!1}\!\setminus\!\{i\}}\otimes \ldots\otimes x_{\sigma_{\!k}} \!\otimes1\\[-2pt] \hspace{-50pt}\downarrow\\[-2pt]
                   1\otimes x_{\sigma_{\!1}}\!\otimes\ldots\otimes x_{\sigma_{\!k}}\!\otimes1\hspace{25pt}\end{smallmatrix}\!; i\!=\!\max\sigma_1\Big\}$. Then the unmatched vertices are
$\mathcal{\mathring{M}}\!=\!\big\{ 1\!\otimes\!x_{i_1}\!\!\otimes\!x_{\sigma_{\!2}}\!\!\otimes\!\ldots\!\otimes\!x_{\sigma_{\!k}}\!\!\otimes\!1;\: i_1\!\leq\!\max\sigma_2\big\}$. If we add edges $\Big\{ \begin{smallmatrix}1\otimes x_{i_1}\!\otimes x_i\otimes x_{\sigma_{\!2}\!\setminus\!\{i\}}\otimes \ldots\otimes x_{\sigma_{\!k}} \!\otimes1\\[-2pt] \hspace{-40pt}\downarrow\\[-2pt]
1\otimes x_{i_1}\!\otimes x_{\sigma_{\!2}}\!\otimes\ldots\otimes x_{\sigma_{\!k}}\!\otimes1\hspace{25pt}\end{smallmatrix}\!; i\!=\!\max\sigma_2\Big\}$ to $\mathcal{M}$, then the
unmatched vertices are $\mathcal{\mathring{M}}\!=\!\big\{ 1\!\otimes\!x_{i_1}\!\!\otimes\!x_{i_2}\!\!\otimes\!x_{\sigma_{\!3}}\!\!\otimes\!\ldots\!\otimes\!x_{\sigma_{\!k}}\!\!\otimes\!1;\: i_1\!\leq\!i_2\!\leq\!\max\sigma_3\big\}$. If we add edges $\Big\{ \begin{smallmatrix}
1\otimes x_{i_1}\!\otimes x_{i_2}\!\otimes x_i\otimes x_{\sigma_{\!3}\!\setminus\!\{i\}}\otimes\ldots\otimes x_{\sigma_{\!k}} \!\otimes1\\[-2pt] \hspace{-23pt}\downarrow\\[-2pt]
1\otimes x_{i_1}\!\otimes x_{i_2}\!\otimes x_{\sigma_{\!3}}\!\otimes\ldots\otimes x_{\sigma_{\!k}}\!\otimes1\hspace{25pt}\end{smallmatrix}\!; i\!=\!\max\sigma_3\Big\}$ to $\mathcal{M}$,
then the unmatched vertices are $\mathcal{\mathring{M}}\!=\!\big\{ 1\!\otimes\!x_{i_1}\!\otimes\!x_{i_2}\!\otimes\!x_{i_3}\!\otimes\!x_{\sigma_{\!4}}\!\!\otimes\!\ldots\!\otimes\!x_{\sigma_{\!k}}\!\otimes\!1;\: i_1\!\leq\!i_2\!\leq\!i_3\!\leq\!\max\sigma_4\big\}$. Seeing the emerging pattern, we collectively define
\[\mathcal{M}=\Big\{\begin{smallmatrix}
1\otimes x_{i_1}\!\otimes \ldots\otimes x_{i_{r\!-\!1}}\otimes x_i\otimes x_{\sigma_{\!r}\!\setminus\!\{i\}}\otimes\ldots\otimes x_{\sigma_{\!k}} \!\otimes1\\[-2pt] \hspace{-10pt}\downarrow\\[-2pt]
1\otimes x_{i_1}\!\otimes \ldots\otimes x_{i_{r\!-\!1}}\otimes x_{\sigma_{\!r}}\!\otimes\ldots\otimes x_{\sigma_{\!k}}\!\otimes1\hspace{25pt}\end{smallmatrix}\!; r\!\geq\!1,\, i_1\!\leq\!\ldots\!\leq\!i_{r-1}\!\leq\!i\!=\!\max\sigma_r\Big\}.\]
For a multiset $\tau\!=\!\{i_1\!\leq\!\ldots\!\leq\!i_k\}$, we denote $x_{(\tau\!)}=1\!\otimes\!x_{i_1}\!\otimes\!\ldots\!\otimes\!x_{i_k}\!\otimes\!1$, and let $\overline{\tau}$ be its corresponding set, e.g. $\tau\!=\!\{1,1,2,5,5,5\}$ implies $\overline{\tau}\!=\!\{1,2,5\}$.

\begin{Prp} The $A^{\!e}$-module $A$ admits a resolution $\mathring{B}_\ast$, in which $\mathring{B}_k$ is free on symbols $\big\{x_{(\tau\!)};\, \tau\!\in\!\bbinom{\smash{[n]}}{k}\!\!\big\}$ and $\mathring{b}_k(x_{(\tau\!)})=\sum_{i\in\overline{\tau}}\big(x_i\!\otimes\!1\!+\!(-1)^k1\!\otimes\!x_i\big)x_{(\tau\!\setminus\!\{i\}\!)}$.
\end{Prp}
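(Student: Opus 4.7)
My plan is to apply the AMT theorem to the matching $\mathcal{M}$ defined in the preceding subsection, proceeding in three steps: verify $\mathcal{M}$ is an acyclic matching with invertible edge weights, identify the unmatched vertices, and compute the Morse differential from the path-sum formula.

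For the first step, each matching edge is a middle-type edge of $b_\ast$ that merges the singleton $x_i$ with $x_{\sigma_r\setminus\{i\}}$ at positions $r, r+1$, with $i=\max\sigma_r$. Its weight is $(-1)^r\cdot(-1)^{|\sigma_r|-1}=\pm1$, hence invertible in $R$. The matching property is checked by observing that a head vertex is characterised by a unique split point $r$ where a singleton $x_i$ at position $r$ is followed by a slot $x_{\sigma_r\setminus\{i\}}$ whose entries are all less than $i$, preceded by non-decreasing singletons $\leq i$; while a tail vertex is characterised by having its first non-singleton slot at position $r$ preceded by non-decreasing singletons bounded by $\max\sigma_r$. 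Case analysis shows these two shapes are disjoint and each vertex of either shape has a unique partner. Acyclicity of $\Gamma_{B_\ast}^{\mathcal{M}}$ then follows from a suitable monovariant, e.g., the pair (position of first non-singleton slot, lexicographic order on the singleton prefix), which is strictly reduced along any composite of a forward non-matching edge with a reversed matching edge.

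For the second step, the telescoping argument already detailed in the ``Matching'' subsection shows the unmatched vertices are exactly $\{x_{(\tau)}:\tau\in\bbinom{\smash{[n]}}{k}\}$, providing the claimed free basis of $\mathring{B}_k$. For the third step, apply $b_k$ to $x_{(\tau)}$ with $\tau=\{i_1\leq\ldots\leq i_k\}$: the leftmost term $(x_{i_1}\otimes 1)\,x_{(\tau\setminus\{i_1\})}$ and the rightmost term $(-1)^k(1\otimes x_{i_k})\,x_{(\tau\setminus\{i_k\})}$ land directly on unmatched vertices. A middle term $(-1)^\ell x_{i_\ell}x_{i_{\ell+1}}$ vanishes when $i_\ell=i_{\ell+1}$ (by $x_i\wedge x_i=0$); when $i_\ell<i_{\ell+1}$, it lands on a matched vertex whose reversed matching edge leads back to the vertex obtained from $x_{(\tau)}$ by transposing $x_{i_\ell}$ and $x_{i_{\ell+1}}$, to which $b$ is reapplied. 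Iterating the zig-zag and cancelling the intermediate paths pairwise, the path sum contributes exactly the ``missing'' summands $\sum_{i\in\overline\tau\setminus\{i_1\}}(x_i\otimes 1)\,x_{(\tau\setminus\{i\})}+\sum_{i\in\overline\tau\setminus\{i_k\}}(-1)^k(1\otimes x_i)\,x_{(\tau\setminus\{i\})}$, which together with the two direct terms yields the claimed formula.

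The main obstacle is the combinatorial sign-tracking in step three, in particular verifying the pairwise cancellation of intermediate zig-zag paths with the correct signs from the $(-1)^\ell$ factors in $b_\ast$, from exterior multiplication, and from the reversed matching weights. An alternative route is to check independently that $\mathring{b}^2=0$ and that $(\mathring{B}_\ast,\mathring{b}_\ast)$ is a resolution of $A$ (by an explicit contracting homotopy, or by identifying it with the standard multiset resolution of $\Lambda(R^n)$ over $A^e$), then invoke AMT only qualitatively; however, since the explicit homotopy equivalence with $B_\ast$ is required for the Hochschild computations in later sections, the path sum must be performed in some form.
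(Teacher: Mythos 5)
Your overall strategy is the same as the paper's: check that $\mathcal{M}$ is an acyclic matching with unit weights, read off the critical cells $x_{(\tau)}$, and enumerate the zig-zag paths to obtain $\mathring{b}_k$. Your identification of the matched/unmatched vertices, the weight $(-1)^r(-1)^{|\sigma_r|-1}$ of a matching edge, the two ``direct'' terms, and the two families of contributing paths per element of $\overline{\tau}$ (move the displaced variable to the left end and drop it with coefficient $x_i\otimes1$, or to the right end with coefficient $(-1)^k\,1\otimes x_i$) all agree with the paper's proof.

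The genuine gap is in your acyclicity step. The monovariant you name --- the pair (position of the first non-singleton slot, lexicographic order on the singleton prefix) --- is not strictly monotone along zig-zags, and the direction is off besides: a transposition zig-zag replaces $x_{i_\ell}\!\otimes\!x_{i_{\ell+1}}$ (with $i_\ell\!<\!i_{\ell+1}$) by $x_{i_{\ell+1}}\!\otimes\!x_{i_\ell}$, which makes the prefix lexicographically \emph{larger}, not smaller. Worse, the zig-zag that deletes the first slot (the edge with weight $x_{i_1}\!\otimes\!1$) and then splits the maximum off the next slot can leave your pair unchanged: starting from $v=1\otimes x_i^{\otimes r}\otimes x_\sigma\otimes\cdots$ with $i=\max\sigma$ and $|\sigma|\geq3$, one lands on $v'=1\otimes x_i^{\otimes r}\otimes x_{\sigma\setminus\{i\}}\otimes\cdots\neq v$ with the same singleton prefix and the same first non-singleton position. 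So the stated monovariant does not rule out cycles. The paper instead takes, in a hypothetical cycle, the reversed matching edge with maximal prefix length $r$ and does a case analysis on the next edge, using that an inversion created in the prefix can never be undone (all subsequent zig-zags also move larger elements leftward); to dispose of the slot-deletion moves one should additionally observe that they strictly shrink the total multiset of variables present, which merges and splits preserve. A second, more minor inaccuracy: in your step three there is no ``pairwise cancellation of intermediate paths.'' The Morse differential sums only over paths terminating at a critical vertex; a path performing two transpositions on disjoint pairs creates two inversions that no subsequent sequence of moves can both repair, so it simply never reaches a critical cell and contributes nothing. With these two points repaired, your argument coincides with the paper's.
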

Since $x_i\!\otimes\!1\!\pm1\!\otimes\!x_i$ is a nonunit of $A^e$, this resolution is minimal.
\begin{proof} By AMT, it suffices to show that: (i) $\mathcal{M}$ is a Morse matching; (ii) determine its critical vertices and zig-zag paths.
\par (i) Any vertex can be written uniquely as $v\!=\! 1\!\otimes\!x_{i_1}\!\!\otimes\!\ldots\!\otimes\!x_{i_r} \!\otimes\!x_{\sigma_{\!r+1}}\!\!\otimes\!\ldots\!\otimes\!x_{\sigma_{\!k}}\!\otimes\!1$ where $i_1\!\leq\!\ldots\!\leq\!i_r$ and $r\!\geq\!0$ is maximal. Then $v$ is terminal iff $i_r\!\leq\!\max\sigma_{r+1}$, initial iff $i_r\!>\!\max\sigma_{r+1}$, and critical (=unmatched) iff $r\!=\!k$. Going up means separating the largest element from $\sigma_{r+1}$, going down means multiplying $x_{i_r}$ and $\sigma_{r+1}$; both moves are unique. This shows that $\mathcal{M}$ is a matching. The weight of an edge in $\mathcal{M}$ is $\pm1$ which is a unit of $R$. Since $A^{\otimes k+2}$ has a finite basis, the digraph $\Gamma_{B_\ast}^{\mathcal{M}}$ contains no infinite zig-zag paths with vertices in degree $k$ and $k\!-\!1$. Suppose there exists a directed cycle in $\Gamma_{B_\ast}^{\mathcal{M}}$ with vertices in degree $k$ and $k\!-\!1$, and along this cycle let \[\xymatrix@R=9.0mm@C=10mm{\ldots \ar[d]|-{\mathcal{M}}\ar[dr]& 1\!\otimes\!x_{i_1}\!\!\otimes\!\ldots\!\otimes\!x_{i_r}\!\!\otimes\!x_{\sigma_{\!r+1}}\!\!\otimes\!\ldots\!\otimes\!x_{\sigma_{\!k}}\!\!\otimes\!1 \ar[dr]|-{e} \ar[d]|-{\mathcal{M}}&\ldots\ar[d]|-{\mathcal{M}}\\
\ldots&1\!\otimes\!x_{i_1}\!\!\otimes\!\ldots\!\otimes\!x_{i_r}x_{\sigma_{\!r+1}}\!\!\otimes\!\ldots\!\otimes\!x_{\sigma_{\!k}}\!\!\otimes\!1&\ldots\vspace{-1mm}}\]
be the edge with largest $r$. What can the edge $e$ be? Multiplying $1\!\otimes\!x_{i_1}$ or $x_{\sigma_{\!k}}\!\otimes\!1$ or $x_{\sigma_{\!s}}\!\otimes\!x_{\sigma_{\!s+1}}$ for $s\!\geq\!r\!+\!2$ does not lead to a terminal vertex. Multiplying $x_{\sigma_{\!r+1}}\!\otimes\!x_{\sigma_{\!r+2}}$ and going up via $\mathcal{M}$ increases $r$, a contradiction with the maximality. Multiplying $x_{i_{\!s}}\!\otimes\!x_{i_{\!s+1}}$ and going up via $\mathcal{M}$ gives $x_{i_{\!s+1}}\!\otimes\!x_{i_{\!s}}$. Here we obtained a decrease $i_{\!s+1}\!>\!i_{\!s}$ in the sequence, but every subsequent zig-zag always puts the largest element on the left, which means that we will never regain $x_{i_{\!s}}\!\otimes\!x_{i_{\!s+1}}$.  This is a contradiction, hence $\Gamma_{B_\ast}^{\mathcal{M}}$ contains no directed cycles and $\mathcal{M}$ is a Morse matching.
\par (ii) Critical vertices are $\mathring{\mathcal{M}}\!=\! \big\{1\!\otimes\!x_{i_1}\!\!\otimes\!\ldots\!\otimes\!x_{i_k}\!\!\otimes\!1;\, i_1\!\leq\!\ldots\!\leq\!i_k \big\}\!=\!\{x_{(\tau)}\!\}$. From $x_{(\tau\!)}$, what are all the paths to all possible $x_{(\tau'\!)}$? Every zig-zag replaces $x_{i_r}\!\!\otimes\!x_{i_{r+1}}$ by $x_{i_{r+1}}\!\!\otimes\!x_{i_r}$ where $i_r\!<\!i_{r+1}$ and it has weight $(-1)^{r+1+r+1}\!=\!1$, but at the end of the zig-zag path, the sequence must be increasing. Thus after that, the only two options are: either keep moving $x_{i_r}$ to the right until it reaches $1$ and we obtain $(-1)^k1\!\otimes\!x_{i_r}\cdot x_{(\tau\setminus\{i_r\}\!)}$, or keep moving $x_{i_{r+1}}$ to the left until it reaches $1$ and we obtain $x_{i_{r+1}}\!\!\otimes\!1\cdot x_{(\tau\setminus\{i_{r+1}\}\!)}$. The sequence $x_{(\tau\!)}$ may contain constant subsequences $x_i\!\otimes\!\ldots\!\!\otimes\!x_i$, and in this case only the first $x_i$ is moved to the beginning, and only the last $x_i$ is moved to the end (because we must not multiply $x_i\!\otimes\!x_i$ since $x_i^2\!=\!0$). Thus the zig-zag paths from $x_{(\tau\!)}$ to $x_{(\tau'\!)}$ correspond to elements of the set $\overline{\tau}$.
\end{proof}\vspace{1mm}

\subsection{Homotopy equivalence}\label{hequivalence} This subsection is only used for determining the cup products on $H\!H^\ast(A;A)$ and can be skipped at first reading.

For a multiset $\tau\!=\!\{i_1\!\leq\!\ldots\!\leq\!i_k\}$, let $S_\tau$ denote the group of all its bijections, which is a subgroup of the usual symmetric group $S_k$ of all bijections of $[k]$. Given $\pi\!\in\!S_\tau$, let $x_{(\pi\tau\!)}=1\!\otimes\!x_{\pi i_1}\!\otimes\!\ldots\!\otimes\!x_{\pi i_k}\!\otimes\!1\in A^{\otimes k+2}$ denote the permuted tensor.

\begin{Prp} The homotopy equivalence $h\!:\mathring{B_\ast}\!\rightarrow\!B_\ast$ induced by $\mathcal{M}$ sends $$\textstyle{h\!:x_{(\tau\!)}\mapsto\sum_{\pi\in S_\tau}\!x_{(\pi\tau\!)}\text{ \;and\;\, } h^{\!-\!1}\!\!:x_{(\pi\tau\!)}\mapsto \icases{x_{(\tau)}}{\pi=\id}{0}{\pi\neq\id}{\!\!}{0pt}{\Big}.}$$
\end{Prp}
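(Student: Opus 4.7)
The plan is to use the explicit AMT description of $h$ and $h^{-1}$ as sums of weighted directed paths in $\Gamma^{\mathcal{M}}_{B_\ast}$, with reversed $\mathcal{M}$-edges carrying weight $-w^{-1}$. For $h^{-1}$, I claim it reduces to the projection onto critical vertices when restricted to permuted singleton tensors. Since $x_{(\pi\tau)}$ is critical iff $\pi = \id$ and otherwise initial (upper-paired via the $\mathcal{M}$-edge merging positions $r, r{+}1$, where $r$ is its sorted-singleton prefix length, by the classification in part (i) of the previous proof), I need only show $h^{-1}(x_{(\pi\tau)}) = 0$ for $\pi \neq \id$. From an initial vertex $v$, only non-$\mathcal{M}$ down-edges leave in $\Gamma^{\mathcal{M}}$, so any directed path from $v$ must first descend; a short inductive argument then shows that every degree-$k$ vertex reachable from $v$ in $\Gamma^{\mathcal{M}}$ is again an initial permuted tensor of the same multiset, so the critical $x_{(\tau)}$ is unreachable.

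For $h$, I would enumerate zig-zag paths from $x_{(\tau)}$ directly. A single zig-zag step from a permuted singleton tensor $(j_1, \ldots, j_k)$ merges positions $s, s{+}1$ via a non-$\mathcal{M}$ boundary edge (so $s$ differs from the vertex's $\mathcal{M}$-edge position, which sits at the first descent) and then splits the resulting compound via the matching's max-first rule, effecting the adjacent transposition $(j_s, j_{s+1}) \mapsto (j_{s+1}, j_s)$. Admissibility (i.e., the intermediate compound vertex is terminal) translates to the numerical condition $j_s < j_{s+1}$ together with $j_1 \leq \cdots \leq j_{s-1} \leq j_{s+1}$. A direct sign count gives per-step weight $+1$: the down-edge contributes $(-1)^s$, the reversed $\mathcal{M}$-edge contributes $-((-1)^{s+1})^{-1} = (-1)^s$, and the product is $1$. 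Hence every admissible path from $x_{(\tau)}$ to a permuted tensor carries total weight $+1$.

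The main obstacle is showing that each $\pi \in S_\tau$ is reached by exactly one admissible zig-zag path, so the coefficient of $x_{(\pi\tau)}$ in $h(x_{(\tau)})$ is precisely $+1$. I would argue by induction on $\mathrm{inv}(\pi)$: for $\pi \neq \id$, among the adjacent descents of $\pi$ exactly one corresponds to a final swap compatible with the admissibility condition, the rest being ruled out by the sorted-prefix constraint (for instance, in the toy case $\tau = \{1,2,3\}$ the candidate path $(1,2,3) \to (1,3,2) \to (3,1,2) \to (3,2,1)$ fails at its last step because $3 \not\leq 2$); the predecessor with one fewer inversion is then uniquely determined, and the induction closes. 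As a sanity check, the claimed formula satisfies the chain-map identity $b_k \circ h = h \circ \mathring{b}_k$: the compound-slot terms in $b_k(\sum_\pi x_{(\pi\tau)})$ cancel pairwise between $\pi$ and $\pi \cdot (s\;s{+}1)$, and the external ($p = 0$ and $p = k$) terms reassemble, via the previous proposition's formula for $\mathring{b}_k$, into $h \circ \mathring{b}_k(x_{(\tau)})$.
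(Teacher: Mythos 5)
Your argument is correct, and its skeleton coincides with the paper's proof: both start from the AMT path-sum description of $h$ and $h^{-1}$, both dispose of $h^{-1}$ by noting that $x_{(\pi\tau)}$ is critical for $\pi=\id$ and initial otherwise, and both identify each zig-zag out of $x_{(\tau)}$ with an adjacent transposition of total weight $+1$, admissible exactly when $j_s<j_{s+1}$ and $j_1\le\cdots\le j_{s-1}\le j_{s+1}$. The genuine difference lies in how you prove the key combinatorial lemma that every $\pi\in S_\tau$ is the endpoint of exactly one admissible path. The paper reads the path from its start and exhibits an explicit normal form $\xi_{(j_1)}\cdots\xi_{(j_k)}$ (first move $j_k$ to position $k$, then $j_{k-1}$ to position $k-1$, and so on), checking separately that this word is a valid path, that every path has this shape, and that the assignment is bijective. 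You read the path from its end: rewriting the admissibility condition in terms of the target sequence shows the final swap is forced to occur at the unique first descent of $\pi$ (the position $s$ with $j_1\le\cdots\le j_s$ and $j_s>j_{s+1}$), and since each admissible step raises $\mathrm{inv}(\pi)$ by exactly one, downward induction on the inversion number delivers existence and uniqueness simultaneously. Your route is shorter and makes uniqueness immediate; the paper's yields a closed-form description of the unique path, which is the more useful output when the homotopy equivalence must later be written out explicitly (as it is for the cup-product computation). Your treatment of $h^{-1}$ --- ruling out descending excursions that return to degree $k$ --- is more laborious than the paper's one-line observation that a path into the unmatched $x_{(\tau)}$ must stay in degrees $k$ and $k+1$ and hence begin with an up-edge, which an initial vertex does not possess; but your version covers the same ground and is not wrong.
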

If $v\!=\!1\!\otimes\!x_{\sigma_1}\!\!\otimes\!\ldots\!\otimes\!x_{\sigma_k}\!\!\otimes\!1\in B_k$ is not a tensor of variables (i.e. contains at least one $x_\sigma$ with $|\sigma|\!\geq\!2$), then determining $h^{\!-\!1}(v)$ is more complicated.
\begin{proof} By AMT, $h$ sends $x_{(\tau\!)}\!\in\!\mathring{B}_k$ to $\sum_{v}\alpha_vv$ where $\alpha_v\!\in\!A^e$ is the sum of weights of all directed paths in $\Gamma_{B_\ast}^{\mathcal{M}}$ from $x_{(\tau\!)}$ to $v$, and $h^{\!-\!1}$ sends $v\!\in\!B_k$ to $\sum_{\tau}\alpha_\tau x_{(\tau\!)}$ where $\alpha_\tau\!\in\!A^e$ is the sum of weights of all directed paths in $\Gamma_{B_\ast}^{\mathcal{M}}$ from $v$ to $x_{(\tau\!)}$.
\par Determining $h^{\!-\!1}$: Since $x_{(\tau\!)}$ is not an endpoint of any $e\!\in\!\mathcal{M}$, a path from $v\!\in\!B_\ast$ to $x_{(\tau\!)}\!\in\!\mathring{B}_\ast$ must be in degrees $k$ and $k\!+\!1$. But if $v\!=\! x_{(\pi\tau')}$, then $v$ is not terminal, so a path from $v$ to $x_{(\tau\!)}$ exists iff $\tau\!=\!\tau'$ and $\pi\!=\!\id$, namely the path of length $0$.
\par Determining $h$: Since $x_{(\tau\!)}$ is not an endpoint of any $e\!\in\!\mathcal{M}$, a path from $x_{(\tau\!)}$ to $v$  must be in degrees $k$ and $k\!-\!1$. Every zig-zag out of $x_{(\tau\!)}$ switches two consecutive elements (i.e. is a transposition $\xi_i\!=\!(i,i\!+\!1)$ with weight $1$), vertex $v$ must be a permuted $x_{(\tau\!)}$. Thus it remains to prove: from any $x_{(\tau\!)}$ to any $x_{(\pi\tau\!)}$ there exists precisely one path (this is equivalent to the existence of a specific normal form on the presentation of $S_{\tau}$ by generators $\xi_i$). Notice that a path from $x_{(\tau\!)}$ to $x_{(\pi\tau\!)}$ does not correspond to an arbitrary product of transpositions, because if we switch $x_{i_r}\!\!\otimes\!x_{i_{r+1}}$, then switching $x_{i_s}\!\!\otimes\!x_{i_{s+1}}$ with $s\!\geq\!r\!+\!2$ is not possible, since it leads to a nonterminal vertex. For example, from $x_{(2,5,5,9)}$ to $x_{(9,5,5,2)}$, the only path is \[\xymatrix@R=5mm@C=2mm{
x_{(\tau\!)}\!\!=\!x_{(2,5,5,9)}\ar[d]& x_{(5,2,5,9)}\ar[d]\ar[dl]|-{\mathcal{M}}& x_{(5,5,2,9)}\ar[d]\ar[dl]|-{\mathcal{M}}& x_{(5,5,9,2)}\ar[d]\ar[dl]|-{\mathcal{M}}& x_{(5,9,5,2)}\ar[d]\ar[dl]|-{\mathcal{M}}& x_{(9,5,5,2)}\!\!=\!x_{(\pi\tau\!)}.\ar[dl]|-{\mathcal{M}}\\
x_{(25,5,9)}& x_{(5,25,9)}& x_{(5,5,29)}& x_{(5,59,2)}& x_{(59,5,2)}& \vspace{-1mm}}\]
\par In general, from $x_{(\tau\!)}\!=\!x_{(i_1,\ldots,i_k)}$ to $x_{(\pi\tau\!)}\!=\!x_{(j_1,\ldots,j_k)}$ there is the following path. First move $j_k$ to position $k$ (producing \smash{$x_{(i_1,\ldots,\!\!\widehat{~j_k},\ldots,i_k,j_k)}$}) via $\prod_{i=\epsilon j_k}^{k-1}\!\xi_i\!=:\!\xi_{(j_k)}$, where $\epsilon j_k$ is the position of the last $j_k$ in $i_1,\ldots,i_k$. Then move $j_{k\!-\!1}$ to position $k\!-\!1$ (producing \smash{$x_{(i_1,\ldots,\widehat{j_{k\!-\!1}},\ldots,\!\!\widehat{~j_k},\ldots,i_k,j_{k\!-\!1},j_k)}$}) via $\prod_{i=\epsilon j_{k\!-\!1}}^{k-2}\!\xi_i\!=:\!\xi_{(j_{k\!-\!1})}$, where $\epsilon j_{k\!-\!1}$ is the position of the last $j_{k\!-\!1}$ in $i_1,\ldots,\!\widehat{~\!j_k},\ldots,i_k$. Then move $j_{k\!-\!2}$ to position $k\!-\!2$ (producing \smash{$x_{(i_1,\ldots,\widehat{j_{k\!-\!2}},\ldots,\widehat{j_{k\!-\!1}},\ldots,\!\!\widehat{~j_k},\ldots,i_k,j_{k\!-\!2},j_{k\!-\!1},j_k)}$}) via $\prod_{i=\epsilon j_{k\!-\!2}}^{k-3}\!\xi_i\!=:\!\xi_{(j_{k\!-\!2})}$, where $\epsilon j_{k\!-\!2}$ is the position of the last $j_{k\!-\!2}$ in $i_1,\ldots,\widehat{j_{k\!-\!1}},\ldots,\!\widehat{~\!j_k},\ldots,i_k$. Continuing this process, every $j_r$ is moved to position $r$, so applying $\xi_{(j_1)}\!\cdots\xi_{(j_k)}$ to $x_{(\tau\!)}$ produces $x_{(\pi\tau\!)}$.
\par This product corresponds to a valid path, since in every $\xi_{(j_r)}$ when we move $j_r$ to the right, left of $j_r$ is an increasing sequence of $i$'s at each step. Every path corresponds to some $\xi_{(j_1)}\!\cdots\xi_{(j_k)}$, since the first step must be moving $j_k$ to position $k$ (if we first apply $\xi_r$ with $r\!<\!\epsilon j_k$, then moving $j_k$ to the right does not lead to a terminal vertex; if we first apply $\xi_r$ with $r\!>\!\epsilon j_k$, then $j_k$ will not be able to move to the right across the transposed $i_{r+1},i_r$), and then inductively every next step must be moving $j_r$ to position $r$. Any permutation $\pi\!\in\!S_\tau$ is equal to a unique product $\xi_{(j_1)}\!\cdots\xi_{(j_k)}$, since applying $\xi_{(j_1)}\!\cdots\xi_{(j_k)}$ to $x_{(\tau\!)}$ produces $x_{(j_1,\ldots,j_k)}$.
\end{proof}

\vspace{2mm}
\section{Homology} In the computation below, we implicitly use the isomorphisms from \ref{identifications}.

\begin{Thm} If $R\!=\!\Z$, then $H\!H_k(A;A)\cong \Z^F\!\!\oplus\!\Z_2^{\,T}$\!, where
\[\textstyle{F=2^{n\!-\!1}\!\bbinom{n}{k}+\icases{1}{k=0}{0}{k\geq1}{\!\!}{1pt}{\Big} \text{ \;\;and\;\; }T=(-1)^{k+1}+2^{n\!-\!1}\!\sum_{0\leq i\leq k}(-1)^{k-i}\!\bbinom{n}{i}.}\vspace{-0mm}\]
In particular, if $R\!=\!K$ is a field, then\:\vspace{-2mm} $$\dim_K H\!H_k(A;A) = \iicases{2^n\bbinom{n}{k}}{\chr K=2}{2^{n\!-\!1}\!\bbinom{n}{k}}{\chr K\neq2,\:k\geq1}{2^{n\!-\!1}+1}{\chr K\neq2,\:k=0}{}{-1pt}{\Bigg}.$$
\end{Thm}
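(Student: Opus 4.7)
The plan is to apply $A\otimes_{A^{\!e}}\!-$ to the minimal resolution $\mathring{B}_\ast$ of the previous proposition. Via the identifications of \ref{identifications}, the Hochschild chain complex becomes $\mathring{C}_k=\bigoplus_{\tau\in\bbinom{\smash{[n]}}{k}}\!\!A$ with differential
\[\mathring{\partial}_k(x_\sigma\otimes x_{(\tau\!)})=\textstyle\sum_{i\in\overline{\tau}}\bigl(x_ix_\sigma+(-1)^kx_\sigma x_i\bigr)\otimes x_{(\tau\setminus\{i\}\!)}.\]
Graded commutativity $x_\sigma x_i=(-1)^{|\sigma|}x_ix_\sigma$ collapses the coefficient to $1+(-1)^{k+|\sigma|}$, so the complex splits as $(\mathring{C}^+\!,\,2\partial')\oplus(\mathring{C}^-\!,\,0)$, where $\mathring{C}^\pm_k$ is spanned by those $x_\sigma\otimes x_{(\tau\!)}$ with $k+|\sigma|$ even (resp.\ odd), and $\partial'(x_\sigma\otimes x_{(\tau\!)})=\sum_{i\in\overline{\tau}\setminus\sigma}\pm x_{\sigma\cup\{i\}}\otimes x_{(\tau\setminus\{i\}\!)}$ (signs as in $x_ix_\sigma$). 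Both $\mathring{C}^\pm_k$ have $\Z$-rank $2^{n-1}\bbinom{n}{k}$ (half of $2^n\bbinom{n}{k}$), so $\mathring{C}^-$ directly contributes the free summand $\Z^{2^{n-1}\bbinom{n}{k}}$ to $H\!H_k$.

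The key step is to show that $(\mathring{C}^+\!,\partial')$ is acyclic except at $H_0\!\cong\!\Z$ (generated by $1\otimes1$). In fact, the whole complex $(\mathring{C},\partial')$ contracts onto $\Z\cdot(\emptyset,\emptyset)$ via the explicit homotopy
\[h(x_\sigma\otimes x_{(\tau\!)})=\begin{cases}x_{\sigma\setminus\{i^*\}}\otimes x_{(\tau\cup\{i^*\}\!)} & \text{if } i^*:=\min(\sigma\cup\overline{\tau})\in\sigma,\\ 0 & \text{otherwise.}\end{cases}\]
Splitting into the cases $i^*\!\in\!\sigma$ and $i^*\!\notin\!\sigma$ (and further, within the sum $\partial'(\sigma,\tau)=\sum_j s_j(\sigma\cup j,\tau\setminus j)$, into the contributions $j\!=\!i^*$ and $j\!\neq\!i^*$), one verifies $(\partial'h+h\partial')(x_\sigma\otimes x_{(\tau\!)})=x_\sigma\otimes x_{(\tau\!)}$ for every basis element except $(\emptyset,\emptyset)$. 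The crucial sign facts are: $\sigma$ contains no element below $i^*=\min(\sigma\cup\overline{\tau})$, so $x_{i^*}x_\sigma=+x_{\sigma\cup\{i^*\}}$; and for $j\!\neq\!i^*$ the extra exchange with $i^*$ in $x_jx_{\sigma\setminus\{i^*\}}$ introduces an extra sign of $-1$ relative to $x_jx_\sigma$, producing exactly the cancellation needed between the $\partial'h$ and $h\partial'$ summands. Since $h$ preserves the parity of $k+|\sigma|$ and $(\emptyset,\emptyset)\!\in\!\mathring{C}^+$, we conclude $H_0(\mathring{C}^+\!,\partial')\!=\!\Z$ and $H_k(\mathring{C}^+\!,\partial')\!=\!0$ for $k\!\geq\!1$.

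Granting acyclicity, for $k\!\geq\!1$ we have $\ker\partial'_k=\mathrm{im}\,\partial'_{k+1}$, hence
\[H_k(\mathring{C}^+\!,2\partial')=\ker\partial'_k/2\,\mathrm{im}\,\partial'_{k+1}=\mathrm{im}\,\partial'_{k+1}/2\,\mathrm{im}\,\partial'_{k+1}\cong(\Z/2)^{r_{k+1}},\]
where $r_j\!:=\!\mathrm{rank}(\mathrm{im}\,\partial'_j)$ is finite (as a submodule of the free $\Z$-module $\mathring{C}^+_{j-1}$). The recursion $r_j+r_{j+1}=2^{n-1}\bbinom{n}{j}$ for $j\!\geq\!1$ (rank-nullity plus acyclicity), with base case $r_1=2^{n-1}\!-\!1$ (from $\mathring{C}^+_0/\mathrm{im}\,\partial'_1\!\cong\!\Z$), telescopes to $r_{k+1}=(-1)^{k+1}+2^{n-1}\sum_{i=0}^k(-1)^{k-i}\bbinom{n}{i}$, matching $T$. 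At $k\!=\!0$, the extra free summand $\Z\!\cdot\!(1\otimes1)$ from $H_0(\mathring{C}^+)$ adds to $\mathring{C}^-_0\!=\!\Z^{2^{n-1}}$, producing the $+[k\!=\!0]$ in $F$. The field case is immediate: if $\chr K\!=\!2$ the scalar $2$ vanishes and $\mathring{\partial}\!\equiv\!0$, giving $H\!H_k\!=\!\mathring{C}_k\otimes K\!=\!K^{2^n\bbinom{n}{k}}$; if $\chr K\!\neq\!2$ then $2$ is invertible so $(\mathring{C}^+\!,2\partial')$ has the same (trivial-above-$0$) homology as $(\mathring{C}^+\!,\partial')$, yielding $\dim_KH\!H_k=2^{n-1}\bbinom{n}{k}$ for $k\!\geq\!1$ and $2^{n-1}\!+\!1$ for $k\!=\!0$. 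The most delicate point is the sign verification in $\partial'h+h\partial'=\mathrm{id}$; once this is dispatched, the rank accounting and the field specialization are routine.
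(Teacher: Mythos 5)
Your proposal is correct, and its skeleton coincides with the paper's: pass to the minimal resolution, split $\mathring{C}_\ast$ by the parity of $|\sigma|+|\tau|$, observe that the odd part has zero differential (giving the free summand $\Z^{2^{n-1}\!\binom{n+k-1}{k}}$) and that the even part is $2$ times an integral complex $\partial'$ that is acyclic above degree $0$, then count torsion by telescoping ranks. The two places where you genuinely diverge are worth noting. First, for the acyclicity of $(\mathring{C}^+,\partial')$ the paper exhibits an algebraic Morse matching $\mathcal{M}'=\{x_\sigma\!\otimes\!x_{(\tau\cup\{i\})}\!\to\!x_{\sigma\cup\{i\}}\!\otimes\!x_{(\tau)};\ \max\sigma<i\geq\max\tau\}$ with sole critical cell $x_\emptyset\!\otimes\!x_{(\emptyset)}$, whereas you write down an explicit contracting homotopy $h$ built on $i^\ast=\min(\sigma\cup\overline{\tau})$; these are two faces of the same construction (a matching with one critical cell \emph{is} a contraction), and your sign analysis, though only sketched, identifies the correct cancellation --- I checked it on small cases and it is the standard Koszul-type contraction. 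Second, for extracting the $2$-torsion the paper invokes a Smith-normal-form doubling trick ($\mathrm{SNF}(2\alpha)=2\,\mathrm{SNF}(\alpha)$ over $\Z$), while you argue directly that $\ker\partial'_k/2\,\mathrm{im}\,\partial'_{k+1}=\mathrm{im}\,\partial'_{k+1}/2\,\mathrm{im}\,\partial'_{k+1}\cong(\Z/2)^{r_{k+1}}$; your route is arguably cleaner and avoids both the AMT formalism and SNF bookkeeping at this stage, at the cost of having to verify the homotopy identity $\partial'h+h\partial'=\mathrm{id}$ by hand. The only presentational gap is at $k=0$, where you should say explicitly that $\mathrm{im}\,\partial'_1$ is a direct summand of $\mathring{C}^+_0$ of rank $2^{n-1}\!-\!1$ (since the quotient is free), so that $H_0(\mathring{C}^+,2\partial')\cong\Z\oplus(\Z/2)^{2^{n-1}-1}$; this is immediate from what you have and matches the stated $F$ and $T$.
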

\begin{proof} 
$A\!\otimes_{\!A^{\!e}}\!\!\mathring{B}_k$ has an $R$-module basis $\{x_\sigma\!\otimes\!x_{(\tau\!)};\, \sigma\!\subseteq\![n], \tau\!\in\!\bbinom{\smash{[n]}}{k}\}$. The homotopy equivalence $B_\ast \!\simeq\! \mathring{B}_\ast$ implies $A\!\otimes_{\!A^{\!e}}\!\!B_\ast \!\simeq\! A\!\otimes_{\!A^{\!e}}\!\!\mathring{B}_\ast \!=:\! \mathring{C}_\ast$ in which the new boundary is \[\begin{array}{c}\mathring{\partial}_k(x_\sigma\!\otimes\!x_{(\tau\!)})=x_\sigma\!\otimes\!\mathring{b}_k(x_{(\tau\!)})=
x_\sigma\!\otimes\! \sum_{i\in\overline{\tau}}\big(x_i\!\otimes\!1\!+\!(-1)^k1\!\otimes\!x_i\big)x_{(\tau\!\setminus\!\{i\}\!)}=\\
\sum_{i\in\overline{\tau}}\big(x_\sigma x_i\!+\!(-1)^kx_ix_\sigma\big)\!\otimes\! x_{(\tau\!\setminus\!\{i\}\!)}=
\sum_{i\in\overline{\tau}}\big((-1)^{|\sigma|}\!+\!(-1)^{|\tau|}\big)x_ix_\sigma\!\otimes\! x_{(\tau\!\setminus\!\{i\}\!)}.\end{array}\]
The chain complex $(\mathring{C}_\ast,\mathring{\partial}_\ast)$ is a direct sum of subcomplexes \[\begin{array}{l}
 C'_\ast\!=\!\big\langle x_\sigma\!\otimes\!x_{(\tau)};\, (-1)^{|\sigma|}\!=\!(-1)^{|\tau|}\big\rangle,\: \partial'(x_\sigma\!\otimes\!x_{(\tau\!)})= \!\!\!\! \underset{i\in\overline{\tau}\setminus\sigma}{\sum}\!\!(-1)^{|\sigma|+j}2\,x_{\sigma\cup\{i\}}\!\otimes\!x_{(\tau\!\setminus\!\{i\}\!)},\\[-3pt]
C''_\ast\!=\!\big\langle x_\sigma\!\otimes\!x_{(\tau)};\, (-1)^{|\sigma|}\!\neq\!(-1)^{|\tau|}\big\rangle,\: \partial''\!=\!0,\text{ where } x_ix_\sigma\!=\!(-1)^jx_{\sigma\cup\{i\}}.\end{array}\]
\par Let $R\!=\!K$ be a field. If $\chr\,K\!=\!2$, then $\mathring{\partial}_\ast\!=\!0$ and thus $\dim_K H\!H_k(A;A)=|\{x_\sigma\!\otimes\!x_{(\tau\!)}\}|= 2^n\!\bbinom{n}{k}$. If $\chr\,K\!\neq\!2$, then $(-1)^{|\sigma|}2$ is a unit of $K$\!, so we define \[\mathcal{M}'=\Big\{\begin{smallmatrix}
x_\sigma\otimes x_{(\tau\cup\{i\}\!)}\\[-2pt] \hspace{-10pt}\downarrow\\[-2pt]
x_{\sigma\cup\{i\}}\otimes x_{(\tau\!)}\end{smallmatrix}\!;\; \max\sigma<i\geq\max\tau\Big\}.\] This is a Morse matching on $C'_\ast$ with no zig-zags, and the critical vertices are $\mathring{\mathcal{M}}'\!=\!\{x_\emptyset\!\otimes\!x_{(\emptyset)}\}$. Hence $C'_\ast$ and $C''_\ast$ contribute $\icases{1}{k=0}{0}{k\geq1}{\!\!}{1pt}{\Big}$ and $2^{n\!-\!1}\!\bbinom{n}{k}$ to $\dim_K\!H\!H_k(A;A)$.
\par Let $R\!=\!\Z$, so $\pm2$ is no longer a unit. Instead of studying $(C'_\ast,\partial')$, let us observe $(C'_\ast,\frac{\partial'}{2})$. We will inductively use a trick: if matrices $\Z^l\!\overset{\alpha}{\longleftarrow}\!\Z^m\!\overset{\beta}{\longleftarrow}\!\Z^n$ satisfy $\alpha\beta\!=\!0$ and have SNF (Smith normal forms) $\alpha\!\equiv\! \mathrm{diag}(a_1,\ldots,a_r,0,\ldots,0)$ and $\beta\!\equiv\! \mathrm{diag}(b_1,\ldots,b_s,0,\ldots,0)$, then $\frac{\Ker\,\alpha}{\Im\,\beta}\cong \Z^{m-r-s}\!\oplus\!\Z_{b_1}\!\oplus\!\ldots\!\oplus\!\Z_{b_s}$ and $\frac{\Ker\,2\alpha}{\Im\,2\beta}\cong \Z^{m-r-s}\!\oplus\!\Z_{2b_1}\!\oplus\!\ldots\!\oplus\!\Z_{2b_s}$. Since the above $\mathcal{M}'$ is a Morse matching on $(C'_\ast,\!\frac{\partial'_\ast}{2})$, it follows that $H_k(C'_\ast,\!\frac{\partial'_\ast}{2})\cong \icases{\Z}{k=0}{0}{k\geq1}{\!\!}{1pt}{\Big}$. Hence boundaries $\frac{\partial'_k}{2}$ have SNFs\vspace{-3mm} \[\Z^{r_0}\!\!\xleftarrow{\!\mathrm{diag}(\overbrace{\scriptstyle1,\ldots,1}^{\smash{r_0-1}},0,\ldots,0)\!\!}\!
\Z^{r_1}\!\!\xleftarrow{\!\!\mathrm{diag}(\!\!\!\smash{\overbrace{\scriptstyle1,\ldots,1}^{r_1\!-r_0+1}}\!\!\!,0,\ldots,0)\!\!}\!\Z^{r_2}\!\!\longleftarrow\!\cdots\!\longleftarrow\!
\Z^{r_k}\!\!\xleftarrow{\!\!\mathrm{diag}(\!\!\!\!\!\!\!\!\!\!\overbrace{\scriptstyle1,\ldots,1}^{\sum_{i=\!-\!1}^k\!(\!-\!1\!)^{k\!-\!i}r_i}\!\!\!\!\!\!\!\!\!\!,0,\ldots,0)\!\!} \!\Z^{r_{k+1}}\!\!\longleftarrow\!\cdots\!,\vspace{-1mm}\] where $r_k\!=\!2^{n\!-\!1}\!\!\bbinom{n}{k}$ and $r_{-1}\!=\!1$. Then we conclude that $H_k(C'_\ast,\partial'_\ast)\cong \Z^F\!\oplus\!\Z_2^T$, where free rank is $F\!=\!\icases{1}{\!k=0}{0}{\!k\geq1}{\!\!}{-1pt}{\big}$ and $2$-torsion rank is $\sum_{i=\!-\!1}^k\!(\!-\!1\!)^{k\!-\!i}r_i$.
\end{proof}

\vspace{2mm}
\section{Cohomology} Using finite additivity of $\Hom(-,-)$, we see that $\Hom_{A^{\!e}}(B_k,A)$ has a basis $\big\{\varphi_{v\!,\sigma}; v\!\in\!\{1\!\otimes\!x_{\!\sigma_{\!1}}\!\!\otimes\!\ldots\!\otimes\!x_{\!\sigma_{\!k}}\!\otimes\!1; \sigma_1,\ldots,\sigma_k\!\subseteq\![n]\}, \sigma\!\subseteq\![n]\big\}$ and $\Hom_{A^{\!e}}(\mathring{B}_k,A)$ has a basis $\big\{\varphi_{\tau\!,\sigma}; \tau\!\in\!\!\bbinom{\smash{[n]}}{k}\!\!, \sigma\!\subseteq\![n]\big\}$, the dual bases of $B_k$ and $\mathring{B}_k$, where $\varphi_{v\!,\sigma}(v')\!=\! \icases{x_\sigma}{v=v'}{0}{v\neq v'}{\!\!}{-1pt}{\Big}$.

\begin{Thm} If $R\!=\!\Z$, then $H\!H^k(A;A)\cong \Z^F\!\!\oplus\!\Z_2^{\,T}$, where
\[\textstyle{F\!=\!2^{n\!-\!1}\!\!\bbinom{n}{k}\!+\!\icases{1}{k=0\text{ and }n\text{ \!odd}}{0}{\text{otherwise}}{\!\!}{1pt}{\Big} \text{ \;and\; }
T\!=\!2^{n\!-\!1}\!\sum_{0\leq i<k}(-1)^{k-\!1\!-i}\!\!\bbinom{n}{i}\!+\!\icases{(\!-\!1\!)^k}{n\text{ \!odd}}{0}{n\text{ \!even}}{\!\!}{1pt}{\Big}.}\vspace{-0mm}\]
In particular, if $R\!=\!K$ is a field, then\:\vspace{-2mm} $$\dim_K H\!H^k(A;A) = \iicases{2^n\bbinom{n}{k}}{\chr K=2}{2^{n\!-\!1}\!\bbinom{n}{k}}{\chr K\neq2,\:(k\geq1\text{ or }n\text{ \!even})}{2^{n\!-\!1}+1}{\chr K\neq2,\:(k=0\text{ and }n\text{ \!odd})}{}{-1pt}{\Bigg}.$$
\end{Thm}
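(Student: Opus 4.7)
The proof parallels the preceding homology computation, using the cochain complex $\mathring C^\ast=\Hom_{A^e}(\mathring B_\ast,A)$. Under the identification of \S\ref{identifications}, it has $R$-basis $\{\varphi_{\tau,\sigma}\}$, and I would begin by computing $\delta^k\varphi_{\tau,\sigma}=\varphi_{\tau,\sigma}\circ\mathring b_{k+1}$: only summands of $\mathring b_{k+1}(x_{(\tau')})$ with $\tau'\setminus\{i\}=\tau$ survive, and $A^e$-linearity yields $(x_i\otimes 1+(-1)^{k+1}1\otimes x_i)\cdot x_\sigma$. Using $x_\sigma x_i=(-1)^{|\sigma|}x_ix_\sigma$ and $x_ix_\sigma=(-1)^{j(i,\sigma)}x_{\sigma\cup\{i\}}$ for $i\notin\sigma$ (with $j(i,\sigma)=|\{l\in\sigma:l<i\}|$), this gives
\[\delta^k\varphi_{\tau,\sigma}=\sum_{i\notin\sigma}\bigl(1+(-1)^{k+1+|\sigma|}\bigr)(-1)^{j(i,\sigma)}\varphi_{\tau\cup\{i\},\sigma\cup\{i\}}.\]

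Since the coefficient $1+(-1)^{k+1+|\sigma|}$ is $0$ or $2$, the complex decomposes as $\mathring C^\ast=C'^\ast\oplus C''^\ast$, where $C'^\ast=\langle\varphi_{\tau,\sigma};\,|\sigma|\not\equiv k\pmod 2\rangle$ carries the nontrivial $\delta'=2\sum\cdots$ and $C''^\ast=\langle\varphi_{\tau,\sigma};\,|\sigma|\equiv k\pmod 2\rangle$ has $\delta''=0$. Each parity class of $\sigma\subseteq[n]$ has $2^{n-1}$ elements, so $\dim C'^k=\dim C''^k=2^{n-1}\bbinom{n}{k}$. If $\chr K=2$ then $\delta\equiv 0$ globally, giving $\dim HH^k=2^n\bbinom{n}{k}$.

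For $\chr K\neq 2$ (or $R$ with $2$ invertible), $C''^\ast$ contributes its full dimension $2^{n-1}\bbinom{n}{k}$ to $HH^k$. For $C'^\ast$ I set up a Morse matching on $(C'^\ast,\delta'/2)$ analogous to that used in the homology theorem, matching $\varphi_{\tau,\sigma}\to\varphi_{\tau\cup\{i\},\sigma\cup\{i\}}$ via a rule parallel to ``$\max\sigma<i\geq\max\tau$'' (say the smallest $i\notin\sigma$ with $i>\max\overline{\tau}$). Acyclicity of the modified digraph $\Gamma^{\mathcal M'}_{C'^\ast}$ is established by the same maximality-of-$r$ argument used in part~(i) of the proof of the minimal-resolution proposition, and a case analysis of (non)matched vertices singles out $\varphi_{\emptyset,[n]}$ as the only critical cell, which lies in $C'^0$ iff $n$ is odd. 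Hence $H^k(C'^\ast,\delta'/2)=R$ when $k=0$ and $n$ is odd, and $0$ otherwise.

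Over $R=\mathbb Z$ I invoke the Smith-normal-form trick of the homology proof verbatim: since $(C'^\ast,\delta'/2)$ is acyclic away from the possible summand at $(k,n)=(0,\text{odd})$, each $\delta'_k/2$ has SNF $\mathrm{diag}(1,\ldots,1,0,\ldots,0)$ with $\sigma_k$ ones, determined by the recursion $\sigma_k+\sigma_{k-1}=\dim C'^k$ for $k\geq 1$ together with $\sigma_{-1}=0$ and $\sigma_0=\dim C'^0-[n\text{ odd}]$. Multiplying by $2$ shows that $\delta'_k$ has SNF with $\sigma_k$ twos, giving $H^k(C'^\ast)\cong\mathbb Z^{[k=0,\,n\text{ odd}]}\oplus\mathbb Z_2^{\sigma_{k-1}}$; combining with the free $H^k(C''^\ast)=\mathbb Z^{2^{n-1}\bbinom{n}{k}}$ and expanding $\sigma_{k-1}$ via the recursion yields the claimed $F$ and $T$. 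The main obstacle is the Morse matching on $C'^\ast$: since both $\tau$ and $\sigma$ grow simultaneously under $\delta$ (unlike in homology where $\sigma$ grows while $\tau$ shrinks), the matching rule needs a careful tiebreaker such as $i>\max\overline{\tau}$ to prevent several sources from targeting the same vertex and to exclude directed zig-zag cycles.
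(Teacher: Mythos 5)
Your overall architecture coincides with the paper's: pass to $\mathring C^\ast=\Hom_{A^e}(\mathring B_\ast,A)$ with basis $\{\varphi_{\tau,\sigma}\}$, split it as $C'^\ast\oplus C''^\ast$ by the parity of $|\sigma|+|\tau|$, note $\delta''=0$ and $\delta'=2\cdot(\cdots)$, handle the two field cases, and over $\Z$ run the Smith-normal-form trick on $(C'^\ast,\delta'/2)$ with the recursion $\sigma_k+\sigma_{k-1}=\dim C'^k$, $\sigma_0=\dim C'^0-[n\text{ odd}]$; all of that is correct and unwinds to the stated $F$ and $T$ (the factor $(-1)^{|\sigma|}$ you dropped from the coefficient is harmless). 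The one step that is genuinely new compared with the homology theorem is the acyclic matching on $C'^\ast$, and the rule you propose does not work. Matching $\varphi_{\tau,\sigma}$ upward to $\varphi_{\tau\cup\{i\},\sigma\cup\{i\}}$ for the smallest $i\notin\sigma$ with $i>\max\overline{\tau}$ leaves far more than $\varphi_{\emptyset,[n]}$ critical: for example $\varphi_{\{n\},\emptyset}\in C'^1$ is unmatched (no $i>n$ exists in $[n]$, and it cannot be the upper end of an edge since $\sigma=\emptyset$), as are $\varphi_{\{n,n,n\},\emptyset}\in C'^3$ and many others; for $n=1$ your matching is empty on all of $C'^\ast$. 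Hence the assertion that a case analysis ``singles out $\varphi_{\emptyset,[n]}$ as the only critical cell'' is false for this rule, and the identification of $H^k(C'^\ast,\delta'/2)$ --- on which the entire $\Z$ computation rests --- is unsupported.

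The correct tiebreaker goes through minima, not maxima. Pair $\varphi_{\tau,\sigma}$ upward with $\varphi_{\tau\cup\{i\},\sigma\cup\{i\}}$ where $i=\min([n]\setminus\sigma)$, subject to the condition $[i-1]\subseteq\sigma$ and $[i-1]\cap\overline{\tau}=\emptyset$ (this is the paper's $\mathcal M'$). Then a vertex $\varphi_{\tau,\sigma}$ with $\sigma\neq[n]$ and $\min\overline{\tau}\geq\min([n]\setminus\sigma)$ is the lower end of exactly one matched edge, a vertex with $\min\overline{\tau}<\min([n]\setminus\sigma)$ (in particular every vertex with $\sigma=[n]$, $\tau\neq\emptyset$) is the upper end of exactly one matched edge via $i=\min\overline{\tau}$, and these cases are mutually exclusive, so the unique critical cell is $\varphi_{\emptyset,[n]}$, which lies in $C'^0$ precisely when $n$ is odd. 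With that replacement (and a short acyclicity check, which is easy here because a matched edge is determined by the initial segment $[i]$ common to $\sigma\cup\{i\}$ and $[n]\setminus\overline{\tau}$), the rest of your argument goes through verbatim and reproduces the paper's proof.
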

\begin{proof} Homotopy equivalence $B_\ast \!\simeq\! \mathring{B}_\ast$ implies $\Hom_{\!A^{\!e}}\!(B_\ast,A) \!\simeq\! \Hom_{\!A^{\!e}}\!(\mathring{B}_\ast,A) \!=:\! \mathring{C}^\ast$\!. In this complex, the new coboundary is \[\begin{array}{c}\big(\mathring{\delta}_k\,\varphi_{\tau\!,\sigma}\big)(x_{(\tau'\!)})= \varphi_{\tau\!,\sigma}\,\big(\mathring{b}_k(x_{(\tau'\!)})\big)= \sum_{i\in\overline{\tau}'}(x_i\!\otimes\!1\!+\!(-1)^{|\tau'|}1\!\otimes\!x_i)\varphi_{\tau\!,\sigma}\big(x_{(\tau'\!\setminus\!\{i\}\!)}\big)=\\[3pt]
\sum_{i\in\overline{\tau}'}(x_i\!\otimes\!1\!+\!(-1)^{|\tau'|}1\!\otimes\!x_i)\!\cdot\!\icases{x_\sigma}{\tau'=\tau\cup\{i\}}{0}{\tau'\neq\tau\cup\{i\}}{\!\!}{1pt}{\Big}=
\sum_{i\in\overline{\tau}'}\!\icases{((-1)^{|\sigma|}+(-1)^{|\tau'|})x_\sigma x_i}{\tau'=\tau\cup\{i\}}{0}{\tau'\neq\tau\cup\{i\}}{\!\!}{1pt}{\Big},\\[3pt]\text{ therefore }\mathring{\delta}_k(\varphi_{\tau\!,\sigma})=\sum_{i\in[n]\setminus\sigma}\big((-1)^{|\sigma|}\!-\!(-1)^{|\tau|}\big)(-1)^j\varphi_{\tau\cup\{i\}\!,\sigma\cup\{i\}},\end{array}\]
where $j$ is the number of transpositions, needed to transform $x_\sigma x_i$ into $x_{\sigma\cup\{i\}}$. The cochain complex $(\mathring{C}^\ast,\mathring{\delta}^\ast)$ is a direct sum of subcomplexes \[\begin{array}{l}
 C'^\ast=\!\big\langle \varphi_{\tau\!,\sigma};\, (-1)^{|\sigma|}\!\neq\!(-1)^{|\tau|}\big\rangle,\: \delta'(\varphi_{\tau\!,\sigma})=\!\!\!\! \underset{i\in[n]\setminus\sigma}{\sum}\!\!\!(-1)^{|\sigma|+j}2\,\varphi_{\tau\cup\{i\}\!,\sigma\cup\{i\}},\\[-6pt]
C''^\ast\!=\!\big\langle \varphi_{\tau\!,\sigma};\, (-1)^{|\sigma|}\!=\!(-1)^{|\tau|}\big\rangle,\: \delta''\!=\!0.\end{array}\]
\par Let $R\!=\!K$ be a field. If $\chr\,K\!=\!2$, then $\mathring{\delta}^\ast\!=\!0$ and thus $\dim_K H\!H^k(A;A)=|\{\varphi_{\tau\!,\sigma}\}|= 2^n\!\bbinom{n}{k}$. If $\chr\,K\!\neq\!2$, then $(-1)^{|\sigma|+j}2$ is a unit of $K$\!, so we define \[\mathcal{M}'=\Big\{\begin{smallmatrix}
\varphi_{\tau\!\cup\!\{i\}\!,\sigma\!\cup\!\{i\}}\\[-1pt] \hspace{-0pt}\downarrow\\[-1pt]
\varphi_{\tau\!,\sigma}\end{smallmatrix}\!;\; [n]\!\setminus\!\overline{\tau}\!\supseteq\![i\!-\!1]\!\subseteq\!\sigma,\, i\!\notin\!\sigma\Big\}.\] This is a Morse matching on $C'^\ast$ with  critical vertices $\mathring{\mathcal{M}}'\!=\!\{\varphi_{\emptyset,[n]};\, n\text{ \!odd}\}$. Hence $C'^\ast$ and $C''^\ast$ contribute $\icases{1}{k=0\text{ and }n\text{ \!odd}}{0}{k\geq1\text{ or }n\text{ \!even}}{\!\!}{1pt}{\Big}$ and $2^{n\!-\!1}\!\!\bbinom{n}{k}$ to $\dim_K\!H\!H^k(A;A)$.
\par Let $R\!=\!\Z$, so $\pm2$ is no longer a unit. Since $\mathcal{M}'$ is a Morse matching on $(C'^\ast,\!\frac{\delta'^\ast}{2})$, it follows that $H^k(C'^\ast,\!\frac{\delta'^\ast}{2})\!\cong\! \icases{\Z}{k=0\text{ and }n\text{ \!odd}}{0}{k\geq1\text{ or }n\text{ \!even}}{\!\!}{1pt}{\Big}$. Hence coboundaries $\frac{\delta'^k}{2}$ have SNFs\vspace{-3mm} \[\Z^{r_0}\!\!\xrightarrow{\!\!\mathrm{diag}(\overbrace{\scriptstyle1,\ldots,1}^{\smash{r_0-1}},0,\ldots,0)\!\!}\!
\Z^{r_1}\!\!\xrightarrow{\!\!\mathrm{diag}(\!\!\!\smash{\overbrace{\scriptstyle1,\ldots,1}^{r_1\!-r_0+1}}\!\!\!,0,\ldots,0)\!\!}\!\Z^{r_2}\!\!\longrightarrow\!\cdots\!\longrightarrow\!
\Z^{r_k}\!\!\xrightarrow{\!\!\mathrm{diag}(\!\!\!\!\!\!\!\!\!\!\overbrace{\scriptstyle1,\ldots,1}^{\sum_{i=\!-\!1}^k\!(\!-\!1\!)^{k\!-\!i}r_i}\!\!\!\!\!\!\!\!\!\!,0,\ldots,0)\!\!} \!\Z^{r_{k+1}}\!\!\text{ if }n\!\notin\!2\N,\vspace{-2mm}\]
\[\Z^{r_0}\!\!\xrightarrow{\!\!\mathrm{diag}(\overbrace{\scriptstyle1,\ldots,1}^{\smash{r_0}},0,\ldots,0)\!\!}\!
\Z^{r_1}\!\!\xrightarrow{\!\!\mathrm{diag}(\smash{\overbrace{\scriptstyle1,\ldots,1}^{r_1\!-r_0}},0,\ldots,0)\!\!}\!\Z^{r_2}\!\!\longrightarrow\!\cdots\!\longrightarrow\!
\Z^{r_k}\!\!\xrightarrow{\!\!\mathrm{diag}(\!\!\!\!\!\!\!\!\!\overbrace{\scriptstyle1,\ldots,1}^{\sum_{i=0}^k\!(\!-\!1\!)^{k\!-\!i}r_i}\!\!\!\!\!\!\!\!\!,0,\ldots,0)\!\!} \!\Z^{r_{k+1}}\!\!\text{ if }n\!\in\!2\N.\vspace{1mm}\] Then we deduce that $H^k(C'^\ast\!,\delta'^\ast) \cong \Z^F\!\!\oplus\!\Z_2^T$, where free rank is $F\!=\!\icases{1}{\!k=0\text{ and }\!n\notin2\N}{0}{\!k\geq1\text{ or }n\in2\N}{\!\!}{2pt}{\Big}\!$ and $2$-torsion rank is $T\!=\!\icases{(\!-\!1\!)\smash{^{\!k}}\!}{\!n\notin2\N}{0\!}{\!n\in2\N}{\!\!}{1pt}{\Big}\!+\!\sum_{i=0}^{k\!-\!1}(\!-\!1\!)^{k-1-i}r_i$, with $r_k\!=\!2^{n\!-\!1}\!\!\bbinom{n}{k}$.
\end{proof}

\vspace{2mm}
\section{Cup product} For any associative unital algebra $A$, the cup product of $f\!\in\!\Hom_{\!R}(A^{\otimes k}\!,A)$ and $g\!\in\!\Hom_{\!R}(A^{\otimes l}\!,A)$ is $f\!\smile\!g\!\in\! \Hom_{\!R}(A^{\otimes k+l}\!,A)$ given by\vspace{-1mm} \[a_1\!\otimes\!\ldots\!\otimes\!a_k\!\otimes\!a_{k+1}\!\otimes\!\ldots\!\otimes\!a_{k+l} \!\longmapsto\! f(a_1\!\otimes\!\ldots\!\otimes\!a_k)\!\cdot\!g(a_{k+1}\!\otimes\!\ldots\!\otimes\!a_{k+l})\vspace{-1mm}\] and makes $H\!H^\ast(A;\!A)$ a graded-commutative $R$-algebra.\vspace{1mm}

\begin{Thm} Let $R\!=\!K$ be a field. If $\chr\,K\!=\!2$, then\vspace{-1mm} \[H\!H^\ast(A;A)\cong \Lambda[x_1,\ldots,x_n]\otimes_{\!K} \!K[x_1,\ldots,x_n],\vspace{-1mm}\] where $x_\sigma\!\!\otimes\!x_\tau$\! corresponds to $\varphi_{\tau\!,\sigma} \!\in\! \Hom_R(\!A^{\!\otimes k}\!,\!A)$. If $\chr\,K\!\neq\!2$, then $H\!H^\ast(A;A)$ is isomorphic to the subalgebra of $\Lambda[x_1,\ldots,x_n]\otimes_{\!K} \!K[x_1,\ldots,x_n]$, spanned by\vspace{-1mm}
\[\big\{x_\sigma\!\!\otimes\!x_{\tau};\: (-1)^{|\sigma|}\!=\!(-1)^{|\tau|}\!\big\} \cup \big\{x_{[n]}\!\otimes\!1\big\}.\]
\end{Thm}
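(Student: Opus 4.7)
The plan is to transfer the cup product from $\Hom_{A^{\!e}}(B_\ast,A)$ to $\Hom_{A^{\!e}}(\mathring{B}_\ast,A)$ via the homotopy equivalence $h, h^{-1}$ of Subsection~\ref{hequivalence}, and then evaluate it on the basis $\{\varphi_{\tau,\sigma}\}$. First I would pull back to $\widetilde{\varphi}_{\tau,\sigma} := \varphi_{\tau,\sigma} \circ h^{-1}$ on the bar resolution. By the explicit formula for $h^{-1}$, this cochain evaluates on a tensor of variables $1\!\otimes\!x_{j_1}\!\otimes\!\cdots\!\otimes\!x_{j_k}\!\otimes\!1$ to $x_\sigma$ exactly when $(j_1,\ldots,j_k)$ equals the sorted sequence $\tau$, and to $0$ otherwise.

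Next I apply the standard bar cup product: $(\widetilde{\varphi}_{\tau,\sigma}\!\smile\!\widetilde{\varphi}_{\tau',\sigma'})(1\!\otimes\!x_{j_1}\!\otimes\!\cdots\!\otimes\!x_{j_{k+l}}\!\otimes\!1) = x_\sigma x_{\sigma'}$ if the first $k$ variables sort to $\tau$ and the last $l$ to $\tau'$, else $0$. Pushing forward using $h(x_{(\tau'')}) = \sum_{\pi\in S_{\tau''}} x_{(\pi\tau'')}$, exactly one permutation contributes, namely the one placing sorted $\tau$ first and sorted $\tau'$ last, and this contribution is nonzero iff $\tau''=\tau\cup\tau'$ as multisets. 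Applying $x_\sigma x_{\sigma'} = (-1)^j x_{\sigma\cup\sigma'}$ for $\sigma\cap\sigma'=\emptyset$ (and $0$ otherwise), I would conclude
\[\varphi_{\tau,\sigma}\!\smile\!\varphi_{\tau',\sigma'} = \begin{cases}(-1)^j \varphi_{\tau\cup\tau',\,\sigma\cup\sigma'} & \sigma\cap\sigma'=\emptyset,\\ 0 & \sigma\cap\sigma'\neq\emptyset,\end{cases}\]
which matches the multiplication $(x_\sigma\!\otimes\!x_\tau)(x_{\sigma'}\!\otimes\!x_{\tau'})$ in $\Lambda[x_1,\ldots,x_n]\!\otimes_{\!K}\!K[x_1,\ldots,x_n]$ under $\varphi_{\tau,\sigma}\!\leftrightarrow\!x_\sigma\!\otimes\!x_\tau$.

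In characteristic $2$, all $\varphi_{\tau,\sigma}$ are cocycles and form a basis of $H\!H^\ast(A;A)$ by the previous theorem, so the correspondence is an algebra isomorphism onto the full tensor product. In characteristic $\neq 2$, the cohomology basis is the parity-matching set $\{\varphi_{\tau,\sigma}; (-1)^{|\sigma|}=(-1)^{|\tau|}\}$ together with the extra class $\varphi_{\emptyset,[n]}$ when $n$ is odd, matching the generators of the claimed subalgebra.

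The main obstacle will be verifying subalgebra closure when $n$ is odd. The only products whose formal representative $\pm\varphi_{\tau\cup\tau',\sigma\cup\sigma'}$ falls outside the claimed basis are those of the form $[\varphi_{\emptyset,[n]}]\!\smile\![\varphi_{\tau,\emptyset}] = \pm[\varphi_{\tau,[n]}]$ with $|\tau|\geq 2$ even. These vanish in cohomology, since $\varphi_{\tau,[n]} = \pm\tfrac{1}{2}\,\delta'(\varphi_{\tau\setminus\{j\},\,[n]\setminus\{j\}})$ for any $j\!\in\!\overline{\tau}$ is a coboundary whenever $2$ is invertible. All other products involving $\varphi_{\emptyset,[n]}$ either vanish directly (when $\sigma\cap[n]\neq\emptyset$, i.e., $\sigma$ is nonempty) or reproduce $\varphi_{\emptyset,[n]}$, so the cup product on $H\!H^\ast(A;A)$ realizes the stated subalgebra structure.
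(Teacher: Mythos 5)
Your proposal follows essentially the same route as the paper: transfer the bar cup product along the homotopy equivalence of \ref{hequivalence} and evaluate $\overline{h}\big(\overline{h}^{-1}(\varphi_{\tau,\sigma})\smile\overline{h}^{-1}(\varphi_{\tau',\sigma'})\big)$ on the basis cochains, arriving at the same formula $(-1)^j\varphi_{\tau\cup\tau',\sigma\sqcup\sigma'}$ (zero when $\sigma\cap\sigma'\neq\emptyset$). The one place you go beyond the paper is the explicit check that $[\varphi_{\emptyset,[n]}]\smile[\varphi_{\tau,\emptyset}]=[\varphi_{\tau,[n]}]=0$ because $\varphi_{\tau,[n]}=\pm\tfrac{1}{2}\delta'(\varphi_{\tau\setminus\{j\},[n]\setminus\{j\}})$; the paper's proof stops at the cochain-level product formula and leaves this closure question implicit, so your addition is correct and genuinely needed. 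Be aware, however, that this very computation shows the identification cannot be literally multiplicative into $\Lambda[x_1,\ldots,x_n]\otimes_K K[x_1,\ldots,x_n]$ when $n$ is odd: there $(x_{[n]}\otimes 1)(1\otimes x_\tau)=x_{[n]}\otimes x_\tau\neq 0$ while the corresponding cup product vanishes, so the displayed span is not closed under the ambient product --- a defect of the theorem's phrasing (shared with the paper) rather than of your argument, but one your final sentence should not paper over by asserting that the correspondence ``realizes the stated subalgebra structure.''
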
\vspace{1mm}
Thus if $\chr\,K\!\neq\!2$, then $H\!H^\ast(A;A)$ has algebra generators
\[\big\{1\!\otimes\!x_{\{i,j\}};\: i\!\leq\!j\!\in\![n]\big\} \cup
  \big\{x_{\{i,j\}}\!\otimes\!1;\: i\!<\!j\!\in\![n]\big\} \cup
  \big\{x_i\!\otimes\!x_j;\: i,j\!\in\![n]\big\} \cup
  \big\{x_{[n]}\!\otimes\!1\big\},\]
and is subject to the relations \cite[Table 1]{citearticleHanXuHHEA}, though their result is missing $x_{[n]}\!\otimes\!1$.
\begin{proof} \ref{hequivalence} induces a homotopy equivalence  $\overline{h}\!: \Hom_{\!R}(B_\ast,A) \!\longrightarrow\! \Hom_{\!R}(\mathring{B}_\ast,A)$, $\varphi\!\mapsto\!\varphi\!\circ\!h$, given by
$\overline{h}(\varphi_{\pi\tau\!,\sigma})\!=\! \varphi_{\tau\!,\sigma}$, $\overline{h}(\varphi_{v\!,\sigma})\!=\! 0$, $\smash{\overline{h}^{-\!1}}\!(\varphi_{\tau\!,\sigma})\!=\! \varphi_{\tau\!,\sigma}\!+\!\sum_v\!\alpha_v\varphi_{v\!,\sigma}$, where $v$ is not a tensor of variables. Indeed, $\varphi_{\pi\tau\!,\sigma}\!\circ\!h(x_{(\tau')})\!=\! \varphi_{\pi\tau\!,\sigma}\!\sum_{\pi'\in S_{\tau'}}\!x_{(\pi'\tau')}\!=\! \icases{x_\sigma}{\tau=\tau'}{0}{\tau\neq\tau'}{\!\!}{-2pt}{\big}$, $\varphi_{v\!,\sigma}\!\circ\!h(x_{(\tau\!)})\!=\! \varphi_{v\!,\sigma}\!\sum_{\pi\in S_\tau}\!x_{(\pi\tau)}\!=\! 0$, $\varphi_{\tau\!,\sigma}\!\circ\!h^{\!-\!1}(x_{(\pi\tau')})\!=\! \icases{x_\sigma}{\tau=\tau'\text{ \!and\! }\pi=\id}{0}{\tau\neq\tau'\text{ or }\pi\neq\id}{\!\!}{-1pt}{\Big}$. Then \vspace{-1mm}
\[\begin{array}{c}\varphi_{\tau\!,\sigma} \!\!\smile\! \varphi_{\tau'\!\!,\sigma'}=
\overline{h}\big(\overline{h}^{-\!1}\!(\varphi_{\tau\!,\sigma}) \!\smile\! \overline{h}^{-\!1}\!(\varphi_{\tau'\!\!,\sigma'})\big)=
\overline{h}\big((\varphi_{\tau\!,\sigma}\!+\!\ldots) \!\smile\! (\varphi_{\tau'\!,\sigma'}\!+\!\ldots)\big)=\\[1mm]
\overline{h}\big(\varphi_{\tau\!,\sigma}\!\!\smile\!\varphi_{\tau'\!,\sigma'}\!+\!\ldots\big)=
\overline{h}\big((-1)^j\varphi_{\tau\otimes\tau'\!\!,\sigma\sqcup\sigma'}\!+\!\ldots\big)= (-1)^j\varphi_{\tau\cup\tau'\!,\sigma\sqcup\sigma'},
\end{array}\]
where $(-1)^j$ is the sign of $x_\sigma x_{\sigma'}$. The three dots represent summands of the form $\varphi_{v\!,\sigma}\!\!\smile\!\varphi_{v'\!\sigma'}\!=\! (-1)^j\varphi_{v\otimes v'\!,\sigma\sqcup\sigma'}$ where either $v$ or $v'$ is not a tensor of variables.
\end{proof}\vspace{2mm}

\begin{Rmk} In general, $H\!H_\ast(A;\!A)$ and $H\!H^\ast(A;\!A)$ are modules over the center $Z(A)$. The multiplication map $A\!\otimes\!A\!\longrightarrow\!A, x\!\otimes\!y\!\mapsto\!xy$ is an algebra morphism iff $A$ is commutative, and then by \cite[4.2.6]{citeLodayCH} it induces a \emph{shuffle} product on $H\!H_\ast(A;\!A)$, given by $(a\!\otimes\!a_1\!\otimes\!\ldots\!\otimes\!a_i)\cdot(a'\!\otimes\!a_{i+1}\!\otimes\!\ldots\!\otimes\!a_{i+j})= $\vspace{-0mm}
\[\textstyle{\sum_{\pi\in S_{i+j},\,\pi_1<\ldots<\pi_i,\,\pi_{i+1}<\ldots<\pi_{i+j}}\! \mathrm{sgn}\pi\,aa'\!\otimes\! a_{\pi^{\!-\!1}_1}\!\!\otimes\!\ldots\!\otimes\!a_{\pi^{\!-\!1}_i}\!\!\otimes\!a_{\pi^{\!-\!1}_{i+1}}\!\!\otimes\!\ldots\!\otimes\!a_{\pi^{\!-\!1}_{i+j}}},\vspace{-0mm}\]
i.e. $a_1\!\otimes\!\ldots\!\otimes\!a_i$ and $a_{i+1}\!\otimes\!\ldots\!\otimes\!a_{i+j}$ are subsequences of $a_{\pi^{\!-\!1}_1}\!\!\otimes\!\ldots\!\otimes\!a_{\pi^{\!-\!1}_{i+j}}$.
\par In our case of an exterior algebra, let $\chr\,K\!=\!2$, so that $A$ is commutative. Then $H\!H_\ast(A;\!A)$ and $H\!H^\ast(A;\!A)$ are commutative graded $A$-algebras. Since all above isomorphisms are $A$-linear, a similar argument using \ref{hequivalence} shows that $$H\!H_\ast(A;\!A)\cong H\!H^\ast(A;\!A)\cong A[y_1,\ldots,y_n].$$
\end{Rmk}

\vspace{2mm}
\section{Afterword} Given a simplicial complex $\Delta$ with vertex set $[n]$, the Stanley-Reisner algebra (SRa) of $\Delta$ is the monomial quotient $\Lambda[x_1,\ldots,x_n\,|\,x_\sigma;\,\sigma\!\notin\!\Delta]$. Its module basis consists of all the simplices of $\Delta$. In the above statements, we have computed HH of the SRa of an $n$-ball. It is an interesting task to determine HH of the SRa of a sphere $\Lambda[x_1,\ldots,x_n\,|\,x_1\cdots x_n]$ or a path $\Lambda[x_1,\ldots,x_n\,|\,x_ix_j;\, |i\!-\!j|\!\geq\!2]$ or a cycle $\Lambda[x_1,\ldots,x_n\,|\,x_ix_j;\, |i\!-\!j\!\mod\!n|\!\geq\!2]$ or other more general simplicial complexes. This still appears to be an open problem.


\begin{thebibliography}{99}
    \bibitem{citearticleHanXuHHEA}                  Y. Han, Y. Xu, \emph{Hochschild (co)homology of exterior algebras}, Comm. Algebra 35 (2007), no. 1, 115--131.
    \bibitem{citearticleJollenbeckADMTACA}          M. J\"{o}llenbeck, \emph{Algebraic Discrete Morse Theory and Applications to Commutative Algebra}, Thesis, (2005).
    \bibitem{citeLampretVavpeticCLAAMT}             L. Lampret, A. Vavpetič, \emph{(Co)Homology of Lie Algebras via Algebraic Morse Theory}, arXiv:1501.02533.
    \bibitem{citeLodayCH}                           J.L. Loday, \emph{Cyclic Homology}, 2nd ed., Springer, GMW 301, (1998).
    \bibitem{citearticleSkoldbergMTFAV}             E. Sk\"{o}ldberg, \emph{Morse Theory from an Algebraic Viewpoint}, Trans. Amer. Math. Soc. 358 (2006), no. 1, 115--129.
\end{thebibliography}
\end{document}